\theoremstyle{plain}
\newtheorem{theorem}{Theorem}[section]
\newtheorem{remark}{Remark}[section]
\newtheorem{lemma}{Lemma}[section]
\newtheorem{proposition}{Proposition}[section]
\title[Vacuum static space]
{$3$-dimensional complete vacuum static spaces}
\author [Q. -M. Cheng and G. Wei]{Qing-Ming Cheng and Guoxin Wei}
\address{Qing-Ming Cheng \\  \newline \indent Department of Applied Mathematics, Faculty of Science,
\newline \indent Fukuoka University, Fukuoka  814-0180, Japan.}
\email{cheng@fukuoka-u.ac.jp}
\address{Guoxin Wei \\ \newline \indent School of Mathematical Sciences, South China Normal University,
\newline \indent 510631, Guangzhou,  China.}
\email{weiguoxin@tsinghua.org.cn}
\begin{document}
\maketitle

\begin{abstract}
In this paper, we study complete Vacuum Static Spaces. A complete classification
of 3-dimensional complete Vacuum Static Spaces with non-negative scalar curvature and constant 
squared norm of Ricci curvature tensor is given by making use of the generalized maximum principle.
\end{abstract} 
\footnotetext{2020 \textit{Mathematics Subject Classification}:
53C21, 53C24, 53C25.}
\footnotetext{{\it Key words and phrases}: a vacuum static space,
Ricci curvature,  scalar curvature,  the generalized maximum principle.}
\footnotetext{The first author was partially supported by JSPS Grant-in-Aid for Scientific Research:
No. 22K03303, the  fund of Fukuoka University: No. 225001.  The second author was partially supported by grant No. 12171164 of NSFC,
GDUPS (2018),  Guangdong Natural Science Foundation Grant No.2023A1515010510.}

\section{introduction}
\vskip2mm
\noindent
Let $(M^{n}, g)$ be an $n$-dimensional  Riemannian manifold with metric $g=(g_{ij})$.
If  there exists a non-constant smooth function $f$ such that  
$$
f_{,ij}=f(R_{ij}-\dfrac{R}{n-1}g_{ij})
$$
holds, $(M^n, g, f)$ is called {\it a Vacuum Static Space}, where $f_{,ij}$, $R_{ij}$ and $R$ denote components of the second covariant 
derivative of $f$, components of the Ricci curvature tensor and the scalar curvature of $(M^n, g)$.  
The function $f$ is called {\it a static potential function}.  It is known that in  many contexts, 
Riemannian manifolds admit static potential $f$. For examples, when one considers the linearization of the scalar curvature,
  static potentials $f$ are  derived.  In the general relativity, for a static space-time which carries  a perfect fluid matter field (cf. \cite{HE}, \cite{KO}),  
as  global solutions to Einstein equations, static potentials $f$ are derived.  On the other hand, existence of the static potentials $f$ is 
also very important in the problems for prescribing the scalar curvature function (cf. \cite{B}, \cite{FM}).
\noindent

It is well-known that the standard round sphere  is a Vacuum Static Space.   Furthermore, Bourguignon \cite{B} and Fischer-Marsden \cite{FM} proved
that, for a complete Vacuum Static Space $(M, g, f)$, the scalar curvature is constant. In fact, according to the second  Bianchi identity, one 
can prove that any Vacuum Static Space $(M, g, f)$ has constant scalar curvature.  On the other hand, Bourguignon \cite{B}  and Fischer-Marsden \cite{FM} 
proved
the set $f^{-1}(0)$ has the measure zero and the set $f^{-1}(0)$ is a totally geodesic regular hypersurface, that is , if $f(p)=0$, $|\nabla f(p)|^2\neq 0$. 
Since existence of a static potential
imposes many restrictions on the geometry of this underlying manifold, it might seem possible to give a description for all Vacuum Static Spaces.
In \cite{FM}, Fischer-Marsden
conjectured the following:
\vskip2mm
\noindent
{\bf Fischer-Marsden Conjecture}. The standard spheres are the only $n$-dimensional compact Vacuum Static Spaces.
\vskip2mm
\noindent
Counterexamples for the above conjecture are given by Kobayashi \cite{K} and Lafontaine \cite{L}. In fact, Kobayashi \cite{K}
gave a complete classification of $n$-dimensional complete Vacuum Static Spaces, which  are   locally conformally flat. 
\vskip2mm
\noindent
{\bf Theorem K}. {\it  A complete connected Vacuum Static Space  $(M^n, g, f)$, which is locally conformally flat, is isometric to one of the following
up to a finite quotient:
\begin{enumerate}
\item a sphere,  a Euclidean space or a hyperbolic space,
\item  the Riemannian product $\mathbb S^1\times S^{n-1}(k)$, $k>0$
\item  the Riemannian product $\mathbb R\times N(k)$, $k\neq 0$,
\item the  Warped product $\mathbb R\times_h N(1)$,
\item a warped product $\mathbb R\times_h N(k)$,
\end{enumerate}
where $N(k)$ be an $n-1$-dimensional connected complete Riemannian manifold of constant curvature $k$,
and $h$ is a periodic solution of the following equations, for constants $c_0$  and $R$, 
\begin{equation}\label{1.1}
\begin{aligned}
&h^{\prime\prime}+\dfrac{R}{n(n-1)}h=c_0h^{1-n},\\
&(h^{\prime})^2+\dfrac{2c_0}{n-2}h^{2-n}+\dfrac{R}{n(n-1)}h^2=k.
\end{aligned}
\end{equation}
}
\begin{remark} As a generalization of Kobayashi's theorems, Qing and Yuan \cite {QY} obtained  the similar results 
when  $N(k)$ is an $n-1$-dimensional  complete Einstein space with  Einstein constant $(n-2)k$, and $(M, g, f)$ is $D$-flat.
\end{remark}

\noindent
Furthermore, for $n$-dimensional, $n\geq 4$, Vacuum Static Spaces  with harmonic curvature, Kim and Shin \cite{KS} and Li \cite{Li}
have given a local classification. Besides $D$-flat Vacuum Static Spaces, there exist non-$D$-flat  Vacuum Static Spaces.
For examples, Riemannian product $S^2(\frac{R}{2(n-1)})\times N^{n-2}$ with $f=c\cos(\sqrt{\frac{R}{2(n-1)}}s)$ 
and  $H^2(\frac{R}{2(n-1)})\times N^{n-2}$ with $f=c\cosh(\sqrt{\frac{R}{2(n-1)}}s)$ are
complete Vacuum Static Spaces,  where $N^{n-2}$ is an Einstein space with Einstein constant $\frac{R}{n-1}$ and $s$ denotes
the distance function from a point on $S^2(\frac{R}{2(n-1)})$ and $H^2(\frac{R}{2(n-1)})$, respectively. Thus, we know
that there are many complete Vacuum Static Spaces.

\noindent
But for $n=3$, except the known examples of Kobayashi \cite{O}, one does not find new examples. Furthermore, Ambrozio \cite{A}
has proved the following:
\vskip2mm
\noindent
{\bf Theorem A}. {\it  A compact  Vacuum Static Space  $(M^3, g, f)$  is isometric to  one of the following up to a finite quotient if 
$$
\sum_{i,j}R_{ij}^2\leq \dfrac{R^2}{2}
$$
\begin{enumerate}
\item  the standard  sphere $S^3$,
\item the  Riemannian  product $S^1\times S^2$.
\end{enumerate}
}
\begin{remark} We should notice that the Schwarzschild-de Sitter spaces of positive mass $m$ does not satisfy the 
condition
$$
\sum_{i,j}R_{ij}^2\leq \dfrac{R^2}{2}.
$$
The Schwarzschild-de Sitter spaces of positive mass $m$  is $S^1\times_h S^2$ with $g=ds^2+h^2g_{S^2}$ and $h$ is a 
periodic solution in \eqref {1.1}.

\end{remark}
\noindent
Very recently, Xu and Ye \cite{XY} announced that they have given a complete classification for closed 3-dimensional
Vacuum  Static Spaces. Unfortunately, they used a fatal flawed inequality  in order to prove their theorem (cf. \cite{bm}).
They have retracted their article.

\vskip2mm
\noindent
In this paper, we  study 3-dimensional complete Vacuum Static Space with non-negative scalar curvature. By making use of the generalized maximum principle
due to Omori \cite{O} and Yau \cite{Y}, we prove
the following

\begin{theorem}\label{theorem 1.1}
Let $(M^{3}, g, f)$ be a $3$-dimensional complete Vacuum Static Space with non-negative  scalar curvature.
%and $\inf (f^2+|\nabla f|^2)>0$.
If the squared norm  $\sum_{i,j}R_{ij}^2$ of the Ricci curvature tensor is  constant, 
 $(M^3, g, f)$  is isometric to  one of the following up to a finite quotient
\begin{enumerate}
\item  the standard  sphere $S^3$,  Euclidean space $\mathbb R^3$,
\item  Riemannian  product $S^1\times S^2$,
\item Riemannian product $\mathbb R\times S^2(1)$.
\end{enumerate}
\end{theorem}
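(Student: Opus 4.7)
The plan is to reduce the classification to showing that the Ricci tensor is parallel, $\nabla\mathrm{Ric}\equiv 0$; the de Rham decomposition together with Kobayashi's Theorem K above (applied to the resulting factors) then immediately produces the three listed geometries. The engine is a Bochner-type identity for a natural combination of $|\nabla f|^2$ and $f^2$ plus the Omori--Yau generalized maximum principle, which is applicable here because the constancy of $|\mathrm{Ric}|^2$ bounds the eigenvalues of $\mathrm{Ric}$ and $(M^3,g)$ is complete.

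I would begin by unpacking the vacuum-static equation $f_{,ij} = f(R_{ij} - \tfrac{R}{2}g_{ij})$. Its trace gives $\Delta f = -\tfrac{R}{2}f$, and a direct computation yields $|\nabla^2 f|^2 = f^2(|\mathrm{Ric}|^2 - \tfrac{R^2}{4})$. Bochner's formula applied to $|\nabla f|^2$, combined with these two identities, produces the key identity
$$
\tfrac{1}{2}\Delta\!\bigl(|\nabla f|^2 + \tfrac{R}{2}f^2\bigr) \;=\; \bigl(|\mathrm{Ric}|^2 - \tfrac{R^2}{2}\bigr)f^2 \;+\; R_{ij}f^{,i}f^{,j},
$$
together with the gradient formula $\nabla_k\bigl(|\nabla f|^2 + \tfrac{R}{2}f^2\bigr) = 2fR_{ki}f^{,i}$. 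Commuting covariant derivatives in the static equation and invoking the dimension-$3$ expression of Riemann in terms of Ricci yields a Cotton-type identity writing $f(R_{ij,k} - R_{ik,j})$ algebraically in $\mathrm{Ric}$, $\nabla f$, and $g$. Finally, constancy of $R$ and of $|\mathrm{Ric}|^2$ delivers the relations $g^{ij}R_{ij,k}=0$ and $R^{ij}R_{ij,k}=0$, which further constrain $\nabla\mathrm{Ric}$ pointwise.

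With these identities in hand I would apply the Omori--Yau principle to $w := |\nabla f|^2 + \tfrac{R}{2}f^2$, which is nonnegative since $R\geq 0$. Along an infimum sequence $\{p_k\}$ one obtains $|\nabla w|(p_k)\to 0$ and $\liminf\Delta w(p_k)\geq 0$; the gradient condition forces $\nabla f(p_k)$ to approach a Ricci eigendirection (or $f(p_k)\to 0$), while the Laplacian condition, combined with the key identity, restricts the possible value of $|\mathrm{Ric}|^2$ relative to $R^2/2$. Running this against the Cotton-type algebraic constraints leaves only $|\mathrm{Ric}|^2 = R^2/3$ (Einstein) and $|\mathrm{Ric}|^2 = R^2/2$. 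In the Einstein case the static equation reduces to $f_{,ij} = -\tfrac{R}{6}fg_{ij}$; an Obata-type argument gives $S^3$ when $R>0$, while $R=0$ forces $\nabla^2 f\equiv 0$ and hence $(M^3,g)\cong\mathbb R^3$ up to finite quotient. When $|\mathrm{Ric}|^2 = R^2/2$, the Ricci eigenvalues are forced to be $(0,R/2,R/2)$ with constant multiplicities and $\nabla\mathrm{Ric}=0$ follows; de Rham splits $(M^3,g)$ along the kernel of $\mathrm{Ric}$, and Theorem K identifies the orthogonal factor as a round $S^2$, giving $S^1\times S^2$ or $\mathbb R\times S^2$ up to finite quotient.

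The main obstacle will be ruling out the intermediate range $R^2/3 < |\mathrm{Ric}|^2 < R^2/2$ and the strict possibility $|\mathrm{Ric}|^2 > R^2/2$. Since $f$ and $w$ need not be globally bounded above on a noncompact $M^3$ (for instance $f$ is linear on $\mathbb R^3$), the Omori--Yau principle yields only one-sided information coming from the infimum of $w$; to close the argument one must sign the two terms $R_{ij}f^{,i}f^{,j}$ and $(|\mathrm{Ric}|^2 - R^2/2)f^2$ simultaneously along the approximating sequence, leveraging the rigidity supplied by the Cotton identity and the trace relations $g^{ij}R_{ij,k}=R^{ij}R_{ij,k}=0$.
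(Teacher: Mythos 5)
Your Bochner identity for $w=|\nabla f|^2+\tfrac{R}{2}f^2$ and the gradient formula $\nabla_k w=2fR_{ki}f^{,i}$ are correct, and they are indeed standard tools for vacuum static spaces. But the proposal has a genuine gap exactly where the theorem is hard: you never actually rule out the range $R^2/3<|\mathrm{Ric}|^2<R^2/2$ (nor $|\mathrm{Ric}|^2>R^2/2$), and the sentence ``running this against the Cotton-type algebraic constraints leaves only $|\mathrm{Ric}|^2=R^2/3$ and $|\mathrm{Ric}|^2=R^2/2$'' is an assertion, not an argument. Worse, the tool you choose is unlikely to close it: applying Omori--Yau to $w$ at its \emph{infimum} gives $\liminf\Delta w(p_k)\ge 0$, i.e. $(|\mathrm{Ric}|^2-\tfrac{R^2}{2})f^2+R_{ij}f^{,i}f^{,j}\gtrsim 0$ in the limit; if $\inf w=0$ and $R>0$ this forces $f(p_k)\to 0$ and $\nabla f(p_k)\to 0$ along the sequence, so the inequality degenerates to $0\ge 0$ and carries no information, while if $\inf w>0$ the two terms have uncontrolled signs (Ricci need not be nonnegative) and cannot be separated. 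There are also unproved claims in the endpoint cases: constancy of $|\mathrm{Ric}|^2=R^2/2$ together with $\mathrm{tr}\,\mathrm{Ric}=R$ leaves a one-parameter family of possible eigenvalue triples, so the spectrum $(0,R/2,R/2)$ and the conclusion $\nabla\mathrm{Ric}=0$ do not ``follow'' without the extremality argument you have omitted.

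For comparison, the paper attacks the problem through the \emph{third} elementary symmetric quantity $F_3=\sum_i\lambda_i^3$ of the traceless Ricci tensor $E$, not through $w$. Constancy of $S=|E|^2$ turns the identity for $\tfrac12 f\Delta S$ into the pointwise relation $\sum E_{ij,k}^2+\tfrac12\sum C_{ijk}^2+6F_3+RS=0$, which (with $R\ge 0$) yields the two-sided bound $-S^{3/2}/\sqrt6\le F_3\le -RS/6$. Omori--Yau is then applied to $F_3$, which \emph{is} bounded above; the extremal case $\sup F_3=-S^{3/2}/\sqrt6$ forces two eigenvalues to coincide, kills the Cotton tensor, and hands the classification to Kobayashi's theorem, while the strict case is excluded by passing to the limit in the $\Delta F_3$ identity (after controlling $\bar E_{ij,k}$, $\bar C_{ijk}$ and $f_{,j}/f$ along the sequence) to get $R\sup F_3+S^2\le 0$, hence $S<R^2/6$, contradicting $\sup F_3>-S^{3/2}/\sqrt6$ which gives $S>R^2/6$. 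That two-sided squeeze on $F_3$ is the idea your proposal is missing.
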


\begin{remark} It should be  remarked  that  in our theorem,  $(M^{3}, g)$
may be  complete and non-compact, which is weaker than compact in Ambrozio \cite{A} and  Xu and Ye \cite{XY}.
 $\mathbb R^3$ and $\mathbb R\times S^2(1)$ are complete non-compact  Vacuum Static Spaces.
\end{remark}

%\begin{theorem}\label{theorem 1.2}
%Let $(M^{3}, g, f)$ be a $3$-dimensional compact  Vacuum Static Space.
%If the squared norm  $\sum_{i,j}R_{ij}^2$ of the Ricci curvature tensor satisfies
 %$$
%\sum_{i,j}R_{ij}^2\geq \dfrac{R^2}{2},
%$$
 %$(M^3, g, f)$  is covered by  the  Riemannian  product $S^1\times S^2$.
%\end{theorem}

\vskip5mm
\section {Basic formulas for $n$-dimensional Vacuum Static Spaces}
\vskip5mm

\noindent
Let $(M^{n}, g)$ be an $n$-dimensional Riemannian manifold with metric $g$. For a  local orthonormal frame field
$\{e_{i}\}_{i=1}^{n}$  at a point  with dual co-frame field
$\{\omega_{i}\}_{i=1}^{n}$.
The structure equations of $M^n$ are given by
\begin{equation*}
d\omega_{i}=\sum_j \omega_{ij}\wedge\omega_j, \quad  \omega_{ij}=-\omega_{ji},
\end{equation*}
\begin{equation*}
d\omega_{ij}-\sum_k \omega_{ik}\wedge\omega_{kj}=-\frac12\sum_{k,l}R_{ijkl} \omega_{k}\wedge\omega_{l},
\end{equation*}
where $R_{ijkl}$ denote components of the curvature tensor of the  Riemannian manifold $(M^n, g)$.
Components $R_{ij}$ of Ricci curvature tensor  and scalar curvature $R$ are defined by
\begin{equation}\label{2.2}
R_{ij}=\sum_{k}R_{kikj}, \quad R=\sum_iR_{ii}.
\end{equation}
\begin{lemma} 
$R_{ijkl}$ satisfies
\begin{equation}
\begin{aligned} 
&R_{ijlk}=-R_{ijkl},  \  \   R_{jikl}=-R_{ijkl}, \\
& R_{klij}=R_{ijkl}, \\
& R_{ijkl}+R_{iklj}+R_{iljk}=0.
\end{aligned}
\end{equation}
\end{lemma}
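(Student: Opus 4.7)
The plan is to derive all four identities from the two structure equations that define $R_{ijkl}$, together with $d^2=0$ and the antisymmetry $\omega_{ij}=-\omega_{ji}$ of the Levi-Civita connection forms. The two antisymmetries (in $(k,l)$ and in $(i,j)$) are essentially definitional; the first Bianchi identity follows from applying $d^2=0$ to the first structure equation; and the pair-interchange symmetry is a purely algebraic consequence of the preceding three, requiring no further geometric input.

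To prove $R_{ijlk}=-R_{ijkl}$, I would observe that the right-hand side of the second structure equation is a $2$-form in $(k,l)$; since $\omega_k\wedge\omega_l=-\omega_l\wedge\omega_k$, only the part of the coefficient skew in $(k,l)$ contributes, and one takes $R_{ijkl}$ to be skew in $(k,l)$ by convention. For $R_{jikl}=-R_{ijkl}$, I would substitute $\omega_{ji}=-\omega_{ij}$ into the second structure equation with $(i,j)$ swapped; noting that $\sum_k\omega_{jk}\wedge\omega_{ki}=-\sum_k\omega_{ik}\wedge\omega_{kj}$ after using $\omega_{jk}=-\omega_{kj}$, $\omega_{ki}=-\omega_{ik}$ and reversing the wedge, the entire left-hand side flips sign, forcing $R_{jikl}=-R_{ijkl}$.

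Next, for the first Bianchi identity, I would apply $d$ to $d\omega_i=\sum_j\omega_{ij}\wedge\omega_j$ and use $d^2\omega_i=0$. Inserting the second structure equation for $d\omega_{ij}$ and the first for $d\omega_j$, the two triple products $\sum_{j,k}\omega_{ik}\wedge\omega_{kj}\wedge\omega_j$ and $\sum_{j,k}\omega_{ij}\wedge\omega_{jk}\wedge\omega_k$ coincide after relabeling the dummy indices and so cancel, leaving
$$
\sum_{j,k,l}R_{ijkl}\,\omega_j\wedge\omega_k\wedge\omega_l=0.
$$
Combined with the antisymmetry in $(k,l)$, the coefficient of $\omega_a\wedge\omega_b\wedge\omega_c$ for $a<b<c$ reduces to $2(R_{iabc}+R_{ibca}+R_{icab})$, and setting this to zero yields the desired identity.

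Finally, for the pair-interchange $R_{klij}=R_{ijkl}$, I would write the first Bianchi identity four times, once with each of $i,j,k,l$ playing the role of the fixed first index and the other three cycled, and then form the alternating sum $(1)-(2)-(3)+(4)$ of these four equations. Eight of the twelve terms cancel in pairs by the two antisymmetries already established (for instance $R_{iljk}+R_{lijk}=0$ and $R_{jkli}+R_{kjli}=0$), and the remaining four collapse to $2R_{ijkl}-2R_{klij}=0$. The only genuine obstacle in the whole argument is the careful bookkeeping in this final algebraic step; conceptually, the lemma expresses nothing more than $d^2=0$ on the frame together with the skew-symmetry of the connection forms.
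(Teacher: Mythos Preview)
Your proof is correct and follows the classical moving-frame derivation of the Riemann tensor symmetries: the two antisymmetries come from the skew-symmetry of $2$-forms and of the connection forms $\omega_{ij}=-\omega_{ji}$, the first Bianchi identity from $d^2\omega_i=0$, and the pair symmetry as an algebraic consequence of the first three. Each step is carried out properly; in particular, your computation that the coefficient of $\omega_a\wedge\omega_b\wedge\omega_c$ collapses to $2(R_{iabc}+R_{ibca}+R_{icab})$ is correct, and the alternating-sum trick for the pair symmetry is the standard one.

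As for comparison: the paper states this lemma without proof, treating the identities as the well-known symmetries of the curvature tensor and moving immediately to the definition of $R_{ijkl,m}$. So there is no alternative argument in the paper to contrast with yours; you have simply supplied a full proof where the authors assumed the reader would take the result for granted.
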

\noindent
Defining  covariant derivative of $R_{ijkl}$ by
\begin{equation}\label{2.1-8}
\begin{aligned}
&\sum_mR_{ijkl,m}\omega_m\\
&=dR_{ijkl}+\sum_mR_{mjkl}\omega_{mi}
+\sum_mR_{imkl}\omega_{mj}+\sum_mR_{ijml}\omega_{mk}+\sum_mR_{ijkm}\omega_{ml}, 
\end{aligned}
\end{equation}
we know  that $R_{ijkl,m}$ satisfies the following
\begin{equation}\label{2.4}
\begin{aligned} 
& R_{ijkl,m}+R_{ijlm,k}+R_{ijmk,l}=0,
\end{aligned}
\end{equation}
which is called {\it the second  Bianchi identity}.
\noindent
Define  covariant derivatives  of $R_{ij}$ by
\begin{equation}\label{2.5}
\begin{aligned}
&\sum_{k}R_{ij,k}\omega_k=dR_{ij}+\sum_kR_{ik}\omega_{kj}+\sum_k R_{kj}\omega_{ki},\\
&\sum_{l}R_{ij,kl}\omega_l=dR_{ij,k}+\sum_lR_{lj,k}\omega_{li}+\sum_l R_{il,k}\omega_{lj}+\sum_l R_{ij,l}\omega_{lk},\\
\end{aligned}
\end{equation}
we have, according to the second Bianchi identity \eqref{2.4} and taking exterior derivative of the first formula of \eqref{2.5},
\begin{equation}\label{2.6}
\begin{aligned}
&R_{ik,m}-R_{im,k}=\sum_lR_{kmil,l},\\
&R_{ij,kl}-R_{ij,lk}=\sum_mR_{mj} R_{mikl}+\sum_m R_{im}R_{mjkl}.\\
\end{aligned}
\end{equation}
For a smooth function $f$, we define covariant derivatives $f_{i}, \ f_{,ij}, \ f_{,ijk}, \ f_{,ijkl}$ of $f$ by
\begin{equation*}
df =\sum_i f_{,i}\omega_i, \ \ \sum_j f_{,ij}\omega_j=df_{,i}+\sum_j
f_{,j}\omega_{ji},
\end{equation*}
\begin{equation}
\sum_j f_{,ijk}\omega_k=df_{,ij}+\sum_kf_{,kj}\omega_{ki}+\sum_lf_{,ik}\omega_{kj},
\end{equation}
\begin{equation}
\sum_j f_{,ijkl}\omega_l=df_{,ijk}+\sum_lf_{,ljk}\omega_{li}+\sum_kf_{,ilk}\omega_{lj}+\sum_kf_{,ijl}\omega_{lk},
\end{equation}
and 
\begin{equation*}
|\nabla f|^2=\sum_{i }(f_{,i})^2,\ \ \Delta f =\sum_i f_{,ii},
\end{equation*}
where $\Delta$ and $\nabla$ denote the Laplacian and the gradient
operator, respectively. We have the Ricci identities
\begin{equation}\label{2.9}
\begin{aligned}
f_{,ijk}-f_{,ikj}&=\sum_{l}f_{,l}R_{lijk},\\
f_{,ijkl}-f_{,ijlk}&=\sum_{m}f_{,mj}R_{mikl}+\sum_{m}f_{,im}R_{mjkl}.
\end{aligned}
\end{equation}
\newline
From the second  Bianchi identity \eqref{2.4}, we have
\begin{equation}\label{2.10}
\sum_iR_{ij,i}=\dfrac12R_{,j}.
\end{equation}
For $n$-dimensional Riemannian manifold $M^n$ with the metric $g$, if there exists a non-constant function $f$ such that
\begin{equation}\label{2.11}
f_{,ij}=f(R_{ij}-\dfrac{R}{n-1}g_{ij})
\end{equation}
holds, $(M^n, g, f)$ is called {\it a Vacuum Static Space}.
\begin{lemma}\label{lemma 2.2}
For    an $n$-dimensional Vacuum Static Space $(M^{n}, g, f)$, the scalar curvature is constant and
for any $i, j, k$,
\begin{equation}
\begin{aligned}
f(R_{ij,k}-R_{ik,j}) &=\sum_{l}f_{,l}R_{lijk}+(f_{,j}R_{ik}-f_{,k}R_{ij})+\dfrac{R}{n-1}(f_{,k}g_{ij}-f_{,j}g_{ik}).
\end{aligned}
\end{equation}
\end{lemma}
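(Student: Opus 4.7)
The plan is to differentiate the defining equation \eqref{2.11} once, skew-symmetrize the result, and match it against the Ricci identity \eqref{2.9}; a contraction then forces $R$ to be constant, which reduces the identity to the claimed form.

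First I would compute $f_{,ijk}$ by taking a covariant derivative of $f_{,ij}=f(R_{ij}-\frac{R}{n-1}g_{ij})$, obtaining
\begin{equation*}
f_{,ijk}=f_{,k}\Bigl(R_{ij}-\tfrac{R}{n-1}g_{ij}\Bigr)+f\Bigl(R_{ij,k}-\tfrac{R_{,k}}{n-1}g_{ij}\Bigr),
\end{equation*}
and similarly for $f_{,ikj}$ by swapping $j\leftrightarrow k$. Subtracting and using the first Ricci identity in \eqref{2.9} gives
\begin{equation*}
\sum_{l}f_{,l}R_{lijk}=f_{,k}R_{ij}-f_{,j}R_{ik}-\tfrac{R}{n-1}\bigl(f_{,k}g_{ij}-f_{,j}g_{ik}\bigr)+f(R_{ij,k}-R_{ik,j})-\tfrac{f}{n-1}\bigl(R_{,k}g_{ij}-R_{,j}g_{ik}\bigr).
\end{equation*}
This is one equation short of the statement; the remaining task is to kill the $R_{,k}$ term by proving $R$ is constant.

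Next I would contract the above identity with $g^{ik}$, i.e. set $k=i$ and sum. Using the symmetries of $R_{ijkl}$ one checks $\sum_{i}R_{liji}=R_{lj}$, so the curvature term on the left becomes $\sum_{l}f_{,l}R_{lj}$, which cancels the term $-\sum_{i}f_{,i}R_{ij}$ coming from the right. The Kronecker deltas yield $f_{,j}R-Rf_{,j}=0$ from the $\frac{R}{n-1}$ piece, and the gradient of the scalar curvature contributes $-fR_{,j}$ from the last piece. Meanwhile, by \eqref{2.10}, the left-hand side $f\sum_i(R_{ij,i}-R_{ii,j})$ equals $f(\tfrac12R_{,j}-R_{,j})=-\tfrac12 fR_{,j}$. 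Comparing, one obtains $\tfrac12 f R_{,j}=0$, hence $fR_{,j}\equiv 0$. Since $f$ is non-constant (so the open set $\{f\neq 0\}$ is nonempty), continuity of $R_{,j}$ forces $R_{,j}\equiv 0$, i.e.\ $R$ is constant.

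Finally, substituting $R_{,k}=0$ back into the skew-symmetrized identity and rearranging produces exactly
\begin{equation*}
f(R_{ij,k}-R_{ik,j})=\sum_{l}f_{,l}R_{lijk}+(f_{,j}R_{ik}-f_{,k}R_{ij})+\tfrac{R}{n-1}(f_{,k}g_{ij}-f_{,j}g_{ik}),
\end{equation*}
as required. The only delicate step is the contraction argument in the second paragraph: the main obstacle is keeping the index contractions and the symmetries of $R_{ijkl}$ straight so that the curvature and Ricci terms cancel correctly, leaving only the $R_{,j}$ contribution. Once that cancellation is verified, the remainder of the lemma is purely algebraic.
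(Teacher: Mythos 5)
Your route is essentially the paper's own: differentiate \eqref{2.11}, skew-symmetrize against the Ricci identity \eqref{2.9}, contract and invoke the contracted second Bianchi identity \eqref{2.10} to obtain $fR_{,j}\equiv 0$, and then substitute $R_{,k}=0$ back to get the displayed formula. One cosmetic remark: in the contraction the term $-\tfrac{f}{n-1}(R_{,k}g_{ij}-R_{,j}g_{ik})$ contributes $-\tfrac{f}{n-1}(1-n)R_{,j}=+fR_{,j}$, not $-fR_{,j}$; combined with the $-\tfrac12 fR_{,j}$ coming from $f\sum_i(R_{ij,i}-R_{ii,j})$ this gives exactly the balance $\tfrac12 fR_{,j}=0$ that you state, so the conclusion $fR_{,j}\equiv 0$ is unaffected.

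There is, however, one genuine (though standard and easily repaired) gap: from $fR_{,j}\equiv 0$ you cannot deduce $R_{,j}\equiv 0$ merely because the open set $\{f\neq 0\}$ is nonempty and $R_{,j}$ is continuous; that argument only yields $R_{,j}=0$ on the closure of $\{f\neq 0\}$. What is actually needed is that $\{f\neq 0\}$ is dense, i.e. that $f^{-1}(0)$ has empty interior. This is precisely the Bourguignon--Fischer--Marsden fact the paper appeals to (the set $f^{-1}(0)$ has measure zero); alternatively, one can note that by \eqref{2.11} the pair $(f,\nabla f)$ satisfies a linear first-order ODE system along any geodesic, so if $f$ vanished on an open set it would vanish identically, contradicting that $f$ is non-constant. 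With that fact supplied, your proof is complete and coincides with the paper's.
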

\begin{proof} 
Since 
\begin{equation*}
f_{,ijk}-f_{,ikj}=\sum_{l}f_{,l}R_{lijk},
\end{equation*}
according to  \eqref{2.11} and $\Delta f=-\dfrac{R}{n-1}f$,  we have
$$
f\sum_iR_{ij,i}=0.
$$
From  \eqref{2.10}, we obtain, for any $j$, 
$$
fR_{,j}=0.
$$
$R$ is constant because the set $f^{-1}(0)$ has measure  zero.
\newline
For any $i, j, k$, from \eqref{2.9} and \eqref{2.11}, we obtain
\begin{equation}
\begin{aligned}
&f(R_{ij,k}-R_{ik,j})\\
&=f_{,ijk}-f_{,ikj}-f_{,k}(R_{ij}-\dfrac{R}{n-1}g_{ij})+f_{,j}(R_{ik}-\dfrac{R}{n-1}g_{ik})\\
 &=\sum_{l}f_{,l}R_{lijk}+(f_{,j}R_{ik}-f_{,k}R_{ij})+\dfrac{R}{n-1}(f_{,k}g_{ij}-f_{,j}g_{ik}).
\end{aligned}
\end{equation}

\end{proof}
\noindent
It is known that Weyl curvature tensor  is defined by
\begin{equation}
W_{ijkl}=R_{ijkl}-\dfrac{1}{n-2}(A_{ik}\delta_{jl}+A_{jl}\delta_{ik}-A_{il}\delta_{jk}-A_{jk}\delta_{il}),
\end{equation}
where 
$$
A_{ij}=R_{ij}-\dfrac{R}{2(n-1)}\delta_{ij}.
$$
$$
C_{ijk}=R_{ij,k}-R_{ik,j}
$$ 
is called Cotton tensor. \newline
For a Vacuum Static Space $(M^n, g, f)$, 
$D_{ijk}$ is defined by
\begin{equation}
\begin{aligned}
(n-2)D_{ijk}&=(n-1)(R_{ik}f_{,j}-R_{ij}f_{,k})+R(f_{,k}\delta_{ij}-f_{,j}\delta_{ik})\\
&+\sum_l(R_{lj}\delta_{ik}-R_{lk}\delta_{ij})f_{,l}.
\end{aligned}
\end{equation}
When  $D_{ijk}\equiv 0$,  $(M^n, g, f)$ is called {\it $D$-flat}.

\noindent In order to prove our results, we need the following generalized maximum principle  due to Omori \cite{O} and Yau \cite{Y}.

\vskip2mm
\noindent
\begin{lemma}\label{lemma 2.3}
Let $(M^{n}\, g)$ be a complete Riemannian manifold  with Ricci curvature bounded from below.
For a $C^{2}$-function  $f$ bounded from above,  there exists a sequence of points $\{p_{k}\}\in M^{n}$, such that
\begin{equation*}
\lim_{k\rightarrow\infty} f(p_{k})=\sup f,\quad
\lim_{k\rightarrow\infty} |\nabla f|(p_{k})=0,\quad
\limsup_{k\rightarrow\infty}\Delta f(p_{k})\leq 0.
\end{equation*}
\end{lemma}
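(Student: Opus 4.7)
The plan is to prove this classical Omori--Yau maximum principle by a penalization argument using a radial comparison function whose growth at infinity compensates for the possible lack of compactness. First, observe that if $f$ attains its supremum at some interior point $p_0\in M^n$, the constant sequence $p_k=p_0$ already satisfies all three conditions, since $\nabla f(p_0)=0$ and $\Delta f(p_0)\le 0$ at an interior maximum. So from now on I may assume $\sup f$ is not attained.

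Fix a reference point $o\in M^n$ and write $r(x)=d(o,x)$. For each $\varepsilon>0$, introduce the penalized function
$$
F_\varepsilon(x)=f(x)-\varepsilon\,\gamma\bigl(r(x)\bigr),
$$
where $\gamma\colon[0,\infty)\to[0,\infty)$ is a smooth nondecreasing function with $\gamma(0)=0$, $\gamma(t)\to\infty$ as $t\to\infty$, and with $\gamma'$, $\gamma''$ growing slowly (for concreteness, take $\gamma(t)=\sqrt{1+t^2}$, for which $\gamma'$ is bounded and $\gamma''=O(t^{-3})$). Because $f$ is bounded above while $\gamma\circ r\to\infty$ outside compact sets, $F_\varepsilon$ attains its supremum at some point $p_\varepsilon\in M^n$, where
$$
\nabla f(p_\varepsilon)=\varepsilon\,\gamma'(r)\nabla r,\qquad \Delta f(p_\varepsilon)\le\varepsilon\,\Delta(\gamma\circ r)(p_\varepsilon).
$$

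Next I would invoke the Laplacian comparison theorem: under the hypothesis $\mathrm{Ric}\ge-(n-1)K$, one has (in the weak sense) $\Delta r\le(n-1)\sqrt{K}\coth(\sqrt{K}\,r)$. Combining this with the chain rule gives a bound of the form $\Delta(\gamma\circ r)\le C(1+r)$ uniformly in $\varepsilon$. Since $F_\varepsilon(p_\varepsilon)\ge F_\varepsilon(o)=f(o)$, one gets $\varepsilon\,\gamma(r(p_\varepsilon))\le f(p_\varepsilon)-f(o)\le \sup f - f(o)$, so $\varepsilon\,\gamma(r(p_\varepsilon))$ stays bounded and in particular $\varepsilon\,\gamma'(r(p_\varepsilon))\to0$ and $\varepsilon\,\Delta(\gamma\circ r)(p_\varepsilon)\to0$ as $\varepsilon\to 0$ (choosing a diagonal subsequence if necessary). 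Along such a sequence $\varepsilon_k\downarrow 0$, the points $p_k:=p_{\varepsilon_k}$ satisfy $|\nabla f|(p_k)\to0$ and $\limsup\Delta f(p_k)\le0$; moreover, since $F_{\varepsilon_k}(p_k)\ge F_{\varepsilon_k}(x)$ for every $x$, choosing $x$ approaching a maximizing sequence for $f$ forces $f(p_k)\to\sup f$.

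The main technical obstacle is the lack of smoothness of $r$ at the cut locus of $o$: the comparison estimate and the computation of $\nabla r$ and $\Delta r$ at $p_\varepsilon$ are only classically valid outside $\mathrm{Cut}(o)$. I would resolve this by \emph{Calabi's trick}: choose a minimizing unit-speed geodesic $\sigma$ from $o$ to $p_\varepsilon$ and replace the basepoint $o$ by $o_\delta=\sigma(\delta)$ for small $\delta>0$, so that $p_\varepsilon$ lies off the cut locus of $o_\delta$ and the new distance $r_\delta(x)=d(o_\delta,x)+\delta$ is a smooth upper support function for $r$ near $p_\varepsilon$. The auxiliary function $f-\varepsilon\gamma(r_\delta)$ still attains a local maximum at $p_\varepsilon$, and the Laplacian comparison applied to $r_\delta$ yields the same bound on $\Delta(\gamma\circ r_\delta)(p_\varepsilon)$ up to an error vanishing with $\delta$. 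Letting $\delta\to0$ recovers the pointwise differential inequalities needed above, completing the argument.
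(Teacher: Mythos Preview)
The paper does not actually prove Lemma~2.3: it is stated as the classical generalized maximum principle and simply attributed to Omori~\cite{O} and Yau~\cite{Y}, with no argument given. So there is no ``paper's own proof'' to compare your attempt against.

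That said, your sketch is a recognizable and essentially correct rendition of the standard penalization proof. A couple of small points worth tightening: with your choice $\gamma(t)=\sqrt{1+t^2}$ one in fact has $\Delta(\gamma\circ r)\le C$ uniformly (not merely $C(1+r)$), because $\gamma'(r)$ tames the $1/r$ singularity of the comparison bound for $\Delta r$ near $r=0$ and $\coth(\sqrt{K}\,r)$ is bounded for $r$ bounded away from zero; this is what actually gives $\varepsilon\,\Delta(\gamma\circ r)(p_\varepsilon)\to 0$ rather than just boundedness. Also, once you know $f(p_{\varepsilon})\to\sup f$ (which you correctly deduce from $F_\varepsilon(p_\varepsilon)\ge F_\varepsilon(x)$ for fixed $x$), the case $p_\varepsilon=o$ is eventually excluded, so the use of $\nabla r$ at $p_\varepsilon$ is legitimate before invoking Calabi's trick for the cut locus. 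With these minor clarifications your argument goes through.
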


 \vskip10mm
\section{3-dimensional Vacuum Static Spaces}
\vskip5mm
\noindent
For $n=3$, we know that Weyl curvature tensor vanishes, that is, 
\begin{equation}\label{3.1}
R_{ijkl}=E_{ik}\delta_{jl}+E_{jl}\delta_{ik}-E_{il}\delta_{jk}-E_{jk}\delta_{il}+\dfrac{R}6(\delta_{ik}\delta_{jl}-\delta_{il}\delta_{jk}),
\end{equation}
where 
$$
E_{ij}=R_{ij}-\dfrac{R}3\delta_{ij}.
$$
$D_{ijk}$ can be rewritten
$$
D_{ijk}=2(E_{ik}f_{,j}-E_{ij}f_{,k})+\sum_l(E_{lj}\delta_{ik}-E_{lk}\delta_{ij})f_{,l}.
$$
Since 
$$
f_{,ij}=f(R_{ij}-\dfrac{R}{2}\delta_{ij})=f(E_{ij}-\dfrac{R}6\delta_{ij}),
$$
we obtain
\begin{equation}\label{3.2}
fC_{ijk}=D_{ijk}.
\end{equation}
In fact,  since the scalar curvature is constant, we have
$$
(fE_{ij})_{,k}-(fE_{ik})_{,j}=fC_{ijk}+f_{,k}E_{ij}-f_{,j}E_{ik},
$$
and 
\begin{equation}\label{3.3}
f_{,ijk}-f_{,ikj}=(fE_{ij})_{,k}-f_k\dfrac{R}6\delta_{ij}-(fE_{ik})_{,j}+f_{,j}\dfrac{R}6\delta_{ik}=\sum_lf_{,l}R_{lijk}.
\end{equation}
We obtain, from \eqref{3.1} and \eqref{3.3}, 
\begin{equation*}
\begin{aligned}
fC_{ijk}&=(fE_{ij})_{,k}-(fE_{ik})_{,j}-(f_{,k}E_{ij}-f_{,j}E_{ik})\\
&=2(E_{ik}f_{,j}-E_{ij}f_{,k})+\sum_l(E_{lj}\delta_{ik}-E_{lk}\delta_{ij})f_{,l}=D_{ijk}.
\end{aligned}
\end{equation*}
Defining 
$S=\sum_{i,j}E_{ij}^2$,  we have
\begin{equation}\label {3.4}
S=\sum_{i,j}R_{ij}^2-\dfrac{R^2}3, \ \  \sum_{i,j,k}D_{ijk}^2=8S|\nabla f|^2-12E^2(\nabla f, \nabla f),
\end{equation}
where 
$$
E^2(\nabla f, \nabla f)=\sum_{i,j,k}E_{ik}E_{kj}f_{,i}f_{,j}.
$$
Define a function $F_3$ by
$$
F_3=\sum_{i,j,k}E_{ij}E_{jk}E_{ki}.
$$
\begin{lemma}
For a 3-dimensional Vacuum Static Space, we have 
\begin{equation}
\begin{aligned}
\Delta( f_{,ij})&=6f\sum_mE_{im}E_{mj}+\dfrac{R}2fE_{ij}+\dfrac{R^2}{12}f\delta_{ij}\\
&-2fS\delta_{ij}+f_{,m}E_{mi,j}+f_{,m}C_{jmi},\\
f\Delta (E_{ij})&=6f\sum_mE_{im}E_{mj}+RfE_{ij}-2fS\delta_{ij}\\
&+f_{,m}(C_{imj}+C_{jmi})-f_{,m}E_{ijm}.\\
\end{aligned}
\end{equation}
\end{lemma}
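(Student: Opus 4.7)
The plan is to establish the second identity for $f\Delta E_{ij}$ first, and then deduce the first identity for $\Delta f_{,ij}$ by differentiating the defining equation $f_{,ij}=f(E_{ij}-\tfrac{R}{6}\delta_{ij})$ twice.

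For the $f\Delta E_{ij}$ formula, since $R$ is constant, $\Delta E_{ij}=\Delta R_{ij}$. I would compute $\Delta R_{ij}=\sum_k R_{ij,kk}$ by writing $R_{ij,k}=R_{ik,j}+C_{ijk}$, differentiating once more in $k$, and then commuting the two covariant derivatives in $R_{ik,jk}$ using the Ricci identity \eqref{2.6}. The contracted second Bianchi identity \eqref{2.10} together with $R={\rm const}$ kills the divergence term $(\sum_k R_{ik,k})_{,j}$. Substituting the $3$-dimensional decomposition \eqref{3.1} into $\sum_{k,m}R_{mk}R_{mijk}$ and using $R_{ij}=E_{ij}+\tfrac{R}{3}\delta_{ij}$ reduces the quadratic curvature expressions to combinations of $E_{im}E_{mj}$, $E_{ij}$, and $S\delta_{ij}$. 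This produces an intermediate identity of the shape
$$\Delta R_{ij}=3\sum_m E_{im}E_{mj}+\tfrac{R}{2}E_{ij}-S\delta_{ij}+\sum_k C_{ijk,k}.$$
It then remains to handle $f\sum_k C_{ijk,k}$. Using $fC_{ijk}=D_{ijk}$ from \eqref{3.2}, one writes $f\sum_k C_{ijk,k}=\sum_k D_{ijk,k}-\sum_k f_{,k}C_{ijk}$ and expands $\sum_k D_{ijk,k}$ straight from the explicit formula for $D_{ijk}$; every Hessian $f_{,ij}$ that appears is replaced by $f(E_{ij}-\tfrac{R}{6}\delta_{ij})$, every $\Delta f$ by $-\tfrac{R}{2}f$, and the contracted Bianchi $\sum_k E_{lk,k}=0$ kills the divergence terms. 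This contributes an extra $3f\sum_m E_{im}E_{mj}+\tfrac{R}{2}fE_{ij}-fS\delta_{ij}$ plus the prescribed $f_{,m}$-linear combination, yielding the second identity.

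For the first identity, I apply the Laplacian to $f_{,ij}=f(E_{ij}-\tfrac{R}{6}\delta_{ij})$. The product rule gives
$$\Delta f_{,ij}=(\Delta f)(E_{ij}-\tfrac{R}{6}\delta_{ij})+2\sum_k f_{,k}E_{ij,k}+f\Delta E_{ij}.$$
Substituting $\Delta f=-\tfrac{R}{2}f$ together with the second identity already proved, and then using $C_{imj}=E_{im,j}-E_{ij,m}$ to rewrite $\sum_m f_{,m}(E_{ij,m}+C_{imj}+C_{jmi})$ as $\sum_m f_{,m}E_{mi,j}+\sum_m f_{,m}C_{jmi}$, gives the first identity.

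The main obstacle is purely bookkeeping: the substitution of \eqref{3.1} into $\sum_{k,m}R_{mk}R_{mijk}$ generates five independent contractions, and the expansion of $\sum_k D_{ijk,k}$ from the six terms of $D_{ijk}$ generates many more; keeping the coefficients of $E_{im}E_{mj}$, $E_{ij}$, $\delta_{ij}$, and $S\delta_{ij}$ straight, and correctly identifying the divergence-free contractions $\sum_k E_{lk,k}=0$ and $\sum_k R_{mkjk}=R_{mj}$, is where careful accounting is required.
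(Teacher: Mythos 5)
Your proposal is correct, and the computations you outline do close up: I checked that commuting derivatives in $\sum_k R_{ik,jk}$ with \eqref{2.6}, using $\sum_k R_{mkjk}=R_{mj}$ and $\sum_k R_{ik,kj}=\tfrac12 R_{,ij}=0$, and substituting \eqref{3.1} indeed yields your intermediate identity $\Delta R_{ij}=3\sum_m E_{im}E_{mj}+\tfrac{R}{2}E_{ij}-S\delta_{ij}+\sum_k C_{ijk,k}$, and that expanding $\sum_k D_{ijk,k}$ contributes exactly $3f\sum_m E_{im}E_{mj}+\tfrac{R}{2}fE_{ij}-fS\delta_{ij}$ plus first-order terms which, after subtracting $\sum_k f_{,k}C_{ijk}$ and using $C_{ijk}=E_{ij,k}-E_{ik,j}$, reassemble into $\sum_m f_{,m}(C_{imj}+C_{jmi})-\sum_m f_{,m}E_{ij,m}$. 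However, your route is genuinely different from the paper's, and runs in the opposite order. The paper first computes $\Delta(f_{,ij})$ by iterating the Ricci identities on the fourth covariant derivatives $f_{,ijkl}$ (commuting until $f_{,klij}$ appears, then setting $k=l$ and summing), converts the resulting curvature-derivative term $\sum_{m,k}f_{,m}R_{mijk,k}$ into the Cotton tensor via $C_{ijk}=\sum_m R_{jkim,m}$, and only afterwards deduces $f\Delta E_{ij}$ from $fE_{ij}=f_{,ij}+\tfrac{R}{6}f\delta_{ij}$; you instead compute $\Delta E_{ij}=\Delta R_{ij}$ by the standard Weitzenb\"ock-type commutation for the Ricci tensor, dispose of $\sum_k C_{ijk,k}$ through the algebraic identity $fC_{ijk}=D_{ijk}$ and the explicit form of $D$, and then recover $\Delta(f_{,ij})$ by the product rule. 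Your version has the advantage of isolating a clean, $f$-independent curvature identity for $\Delta R_{ij}$ before the static equation enters, at the price of leaning on $fC_{ijk}=D_{ijk}$ (so the divergence computation is a priori valid only off $f^{-1}(0)$ and extends to all of $M$ by continuity, since that set has measure zero); the paper's version never needs $D_{ijk}$ at this stage but requires the heavier bookkeeping of the fourth-order commutators of $f$. Both are legitimate, and the coefficient checks you flag as the main obstacle do work out.
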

\begin{proof}
Since 
\begin{equation}\label{3.}
\begin{aligned}
f_{,ijkl}&=f_{,ikjl}+(\sum_mf_{,m}R_{mijk})_{,l}\\
&=f_{,iklj}+\sum_{m}f_{,mk}R_{mijl}+\sum_{m}f_{,im}R_{mkjl}+\sum_mf_{,ml}R_{mijk}+\sum_mf_{,m}R_{mijk,l}\\
&=f_{,klij}+(\sum_mf_{,m}R_{mkil})_{,j}+\sum_{m}f_{,mk}R_{mijl}+\sum_{m}f_{,im}R_{mkjl}\\
&+\sum_mf_{,ml}R_{mijk}+\sum_mf_{,m}R_{mijk,l}\\
&=f_{,klij}+\sum_mf_{,mj}R_{mkil}+\sum_{m}f_{,mk}R_{mijl}+\sum_{m}f_{,im}R_{mkjl}\\
&+\sum_mf_{,ml}R_{mijk}+\sum_mf_{,m}R_{mkil,j}+\sum_mf_{,m}R_{mijk,l}.\\
\end{aligned}
\end{equation}
Since 
$$
f_{,ij}=f(E_{ij}-\dfrac{R}6\delta_{ij}), \ \ \Delta f=-\dfrac{R}2f,
$$
from \eqref{3.1}, we obtain
\begin{equation}
\begin{aligned}
\Delta (f_{,ij})&=(\Delta f)_{,ij}+\sum_mf_{,mj}R_{mi}+\sum_{m,k}f_{,mk}R_{mijk}+\sum_{m}f_{,im}R_{mj}\\
&+\sum_{m,k}f_{,mk}R_{mijk}+\sum_mf_{,m}R_{mi,j}+\sum_{m,k}f_{,m}R_{mijk,k}\\
&=6f\sum_mE_{im}E_{mj}+\dfrac{R}2fE_{ij}+\dfrac{R^2}{12}f\delta_{ij}-2fS\delta_{ij}+f_{,m}E_{mi,j}+f_{,m}C_{jmi}.\\
\end{aligned}
\end{equation}
Here we have used
\begin{equation}\label{3.}
C_{ijk}=R_{ij,k}-R_{ik,j}=\sum_{m}R_{jkim,m}.
\end{equation}
Because  $R$ is constant, we have
\begin{equation}\label{3.1-9}
\begin{aligned}
&fE_{ij,k}=f_{,ijk}+\dfrac{R}6f_k\delta_{ij}-f_{,k}E_{ij},\\
&fE_{ij,kl}=f_{,ijkl}+\dfrac{R}6f_{,kl}\delta_{ij}-f_{,kl}E_{ij}-f_{,k}E_{ij,l}-f_{,l}E_{ij,k}.
\end{aligned}
\end{equation}
Hence, we infer
\begin{equation}\label{3.1-9}
\begin{aligned}
&f\Delta (E_{ij})=\Delta(f_{,ij})-\dfrac{R^2}{12}f\delta_{ij}+\dfrac{R}2fE_{ij}-2\sum_{k}f_{,k}E_{ij,k}\\
&=6f\sum_mE_{im}E_{mj}+RfE_{ij}-2fS\delta_{ij}+\sum_mf_{,m}(C_{imj}+C_{jmi})-\sum_mf_{,m}E_{ij,m}.\\
\end{aligned}
\end{equation}
\end{proof}
\begin{proposition}\label{3.1}
For a 3-dimensional Vacuum Static Space, we have 
\begin{equation}
\begin{aligned}
\dfrac{1}2f\Delta S +\dfrac12\langle \nabla f, \nabla S\rangle &=f\bigl (\sum_{i,j,k}E_{ij,k}^2+\dfrac12\sum_{i,j,k}C_{ijk}^2+6F_3+RS\bigl),\\
\end{aligned}
\end{equation}
%\begin{equation}
%\begin{aligned}
%&\dfrac{1}4f\Delta F_4 +\dfrac14\langle \nabla f, \nabla F_4\rangle \\
%&=f\bigl (RF_4+3SF_3+2\sum_{i,j,k}E_{il}E_{lj}E_{ik,m}E_{jk,m}+\sum_{i,j,k}E_{kj}E_{li}E_{ik,m}E_{jl,m}\bigl)\\
%&+2\sum_{i,j,k,m}f_{,m}E_{ik}E_{jk}C_{imj},\\
%\end{aligned}
%\end{equation}
%
%and 
\begin{equation}
\begin{aligned}
&\dfrac{1}3f\Delta F_3 +\dfrac13\langle \nabla f, \nabla F_3\rangle \\
&=f\bigl (RF_3+S^2+2\sum_{i,j,k}E_{ik}E_{ij,m}E_{jk,m}\bigl)
+2\sum_{i,j,k,m}f_{,m}E_{ik}E_{jk}C_{imj}.\\
\end{aligned}
\end{equation}
\end{proposition}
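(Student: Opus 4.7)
The plan is to derive both formulas by combining Bochner-type identities with the Laplacian formula for $E_{ij}$ already proved in the preceding lemma, and then to simplify the resulting Cotton-tensor terms using the key identity $fC_{ijk}=D_{ijk}$ from \eqref{3.2} together with the definition of $D_{ijk}$.

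For the first formula, I would start from
\begin{equation*}
\tfrac{1}{2}\Delta S=\sum_{i,j,k}E_{ij,k}^{2}+\sum_{i,j}E_{ij}\Delta E_{ij},
\end{equation*}
multiply by $f$, and substitute the expression
\begin{equation*}
f\Delta E_{ij}=6f\sum_{m}E_{im}E_{mj}+RfE_{ij}-2fS\delta_{ij}+f_{,m}(C_{imj}+C_{jmi})-f_{,m}E_{ij,m}
\end{equation*}
from the lemma. Contracting with $E_{ij}$: the term $-2fS\delta_{ij}$ drops out since $E$ is traceless; the curvature term produces $6fF_{3}+RfS$; and the derivative term $-f_{,m}E_{ij}E_{ij,m}=-\tfrac{1}{2}\langle\nabla f,\nabla S\rangle$ migrates to the left. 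What remains on the right is $\sum f_{,m}E_{ij}(C_{imj}+C_{jmi})$, and the task is to show this equals $\tfrac12 f\sum C_{ijk}^{2}$. Using $fC_{ijk}=D_{ijk}$ and the explicit formula for $D_{ijk}$, together with the antisymmetry $C_{ijk}=-C_{ikj}$ and the vanishing $\sum_k C_{kjk}=-\tfrac12 R_{,j}=0$ (since $R$ is constant), one computes $\sum C_{ijk}D_{ijk}=4\sum E_{ik}f_{,j}C_{ijk}$, while relabeling shows $\sum f_{,m}E_{ij}(C_{imj}+C_{jmi})=2\sum f_{,j}E_{ik}C_{ijk}$; combining these gives the desired identity.

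For the second formula, I compute $F_{3,l}=3\sum E_{ij,l}E_{jk}E_{ki}$ (by the cyclic symmetry of the three factors) and
\begin{equation*}
\Delta F_{3}=3\sum(\Delta E_{ij})E_{jk}E_{ki}+6\sum_{l,i,j,k}E_{ij,l}E_{jk,l}E_{ki}.
\end{equation*}
Multiplying by $f/3$ and applying the lemma's formula for $f\Delta E_{ij}$ contracted against $E_{jk}E_{ki}$, the derivative contribution $-\sum f_{,m}E_{ij,m}E_{jk}E_{ki}$ exactly cancels $\tfrac13\langle\nabla f,\nabla F_{3}\rangle$ when moved to the left. The remaining algebraic content is the purely algebraic term $6f\sum E_{im}E_{mj}E_{jk}E_{ki}-2fS\sum \delta_{ij}E_{jk}E_{ki}=6f\,\mathrm{tr}(E^{4})-2fS^{2}$, and the crucial point is the dimension-three Cayley--Hamilton identity for the traceless symmetric tensor $E$: the eigenvalues $\lambda_{1},\lambda_{2},\lambda_{3}$ satisfy $\sum\lambda_{i}=0$, so $\sum\lambda_{i}^{4}=\tfrac12(\sum\lambda_{i}^{2})^{2}$, i.e.\ $\mathrm{tr}(E^{4})=\tfrac12 S^{2}$. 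This reduces the algebraic part to $3fS^{2}-2fS^{2}=fS^{2}$, matching the target. Finally, for the Cotton term, the symmetry $E_{jk}E_{ki}=E_{ik}E_{jk}$ (obtained by swapping $i\leftrightarrow j$ in the $C_{jmi}$ piece and using $E_{ij}=E_{ji}$) shows $\sum f_{,m}(C_{imj}+C_{jmi})E_{jk}E_{ki}=2\sum f_{,m}E_{ik}E_{jk}C_{imj}$, which is precisely the extra term appearing on the right-hand side.

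The main obstacle is bookkeeping: there are many interchangeable indices and one must repeatedly use the symmetry $E_{ij}=E_{ji}$, the antisymmetry $C_{ijk}=-C_{ikj}$, and the constancy of $R$ (to kill spurious traces of the Cotton tensor) to match index patterns. The single nontrivial algebraic input is the three-dimensional identity $\mathrm{tr}(E^{4})=\tfrac12 S^{2}$, which is what produces the clean coefficient of $S^{2}$ in the second formula; everything else is a direct consequence of the lemma and the relation $fC_{ijk}=D_{ijk}$.
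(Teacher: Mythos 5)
Your proposal is correct and follows essentially the same route as the paper: both formulas are obtained by contracting the formula for $f\Delta(E_{ij})$ from Lemma 3.1 against $E_{ij}$ (resp.\ $E_{ik}E_{jk}$), cancelling the gradient term against $\langle\nabla f,\nabla S\rangle$ (resp.\ $\langle\nabla f,\nabla F_3\rangle$), and invoking $\sum_{i,j,k,m}E_{ik}E_{kj}E_{jm}E_{mi}=\tfrac12 S^2$ for a traceless $3\times 3$ tensor. The only cosmetic difference is in the Cotton term of the first formula, where you contract $\sum C_{ijk}D_{ijk}$ directly from the definition of $D_{ijk}$ using the vanishing traces of $C$, while the paper routes the same computation through $\sum E_{ij}D_{ijk}f_{,k}=3E^2(\nabla f,\nabla f)-2S|\nabla f|^2$ and $\sum D_{ijk}^2=8S|\nabla f|^2-12E^2(\nabla f,\nabla f)$; both reduce to $4\sum E_{ik}f_{,j}C_{ijk}=f\sum C_{ijk}^2$.
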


\begin{proof} From the lemma 3.1, we have
\begin{equation}\label{3.13}
\begin{aligned}
&\dfrac12f\Delta S+\dfrac12\langle \nabla f, \nabla S\rangle=f\sum_{i,j,k}E_{ij,k}^2+f\sum_{i,j}E_{ij}\Delta (E_{ij})+\sum_{i,j,k}f_{,k}E_{ij}E_{ij,k}\\
&=f(\sum_{i,j,k}E_{ij,k}^2+6F_3+RS)-2\sum_{i,j,k}E_{ij}C_{ijk}f_{,k}.
\end{aligned}
\end{equation}
On the other hand, from \eqref{3.2} and definition of $D_{ijk}$, we have
$$
\begin{aligned}
&f\sum_{i,j,k}E_{ij}C_{ijk}f_{,k}=\sum_{i,j,k}E_{ij}D_{ijk}f_{,k}\\
&=3E^2(\nabla f, \nabla f)-2S|\nabla f|^2
\end{aligned}
$$
and 
$$
\begin{aligned}
&f^2\sum_{i,j,k}C_{ijk}^2=\sum_{i,j,k}D_{ijk}^2\\
&=-12E^2(\nabla f, \nabla f)+8S|\nabla f|^2
\end{aligned}
$$
We obtain
$$
\begin{aligned}
&4f\sum_{i,j,k}E_{ij}C_{ijk}f_{,k}=-f^2 \sum_{i,j,k}C_{ijk}^2,\\
\end{aligned}
$$
that is,
$$
\begin{aligned}
&4\sum_{i,j,k}E_{ij}C_{ijk}f_{,k}=-f\sum_{i,j,k}C_{ijk}^2,\\
\end{aligned}
$$
From \eqref{3.13}, we have
\begin{equation*}
\begin{aligned}
\dfrac{1}2f\Delta S +\dfrac12\langle \nabla f, \nabla S\rangle &=f\bigl (\sum_{i,j,k}E_{ij,k}^2+\dfrac12\sum_{i,j,k}C_{ijk}^2+6F_3+RS\bigl).\\
\end{aligned}
\end{equation*}
From the definition of $F_3$, we have
$$
\dfrac{1}{3}\nabla_mF_3=\sum_{i,j,k}E_{ik}E_{kj}E_{ij,m}
$$
$$
\dfrac{1}{3}\nabla_l\nabla_mF_3=\sum_{i,j,k}E_{ik}E_{kj}E_{ij,ml}+2\sum_{i,j,k}E_{ik}E_{kj,l}E_{ij,m}.
$$
Thus, from the lemma 3.1, we obtain
\begin{equation}\label{3.14}
\begin{aligned}
&\dfrac13f\Delta F_3+\dfrac13\langle \nabla f, \nabla F_3\rangle\\
&=f\sum_{i,j}E_{ik}E_{jk}\Delta (E_{ij})+2f\sum_{i,j,k,m}E_{ik}E_{kj,m}E_{ij,m}+\sum_{i,j,k}f_{,m}E_{ik}E_{kj}E_{ij,m}\\
&=\sum_{i,j,k}E_{ik}E_{jk}\bigl\{6f\sum_mE_{im}E_{mj}+RfE_{ij}-2fS\delta_{ij}\\
&+\sum_mf_{,m}(C_{imj}+C_{imj})-\sum_mf_{,m}E_{ij,m}\bigl\}\\
&+2f\sum_{i,j,k,m}E_{ik}E_{kj,m}E_{ij,m}+\sum_{i,j,k}f_{,m}E_{ik}E_{kj}E_{ij,m}\\
&=f(6\sum_{i,j,k, m}E_{ik}E_{kj}E_{jm}E_{mi}+RF_3-2S^2)\\
&+2f\sum_{i,j,k,m}E_{ik}E_{kj,m}E_{ij,m}-2\sum_{i,j,k}E_{ik}E_{jk}C_{ijm}f_{,m}.
\end{aligned}
\end{equation}
Since $\sum_iE_{ii}=0$, we have
$$
\sum_{i,j,k, m}E_{ik}E_{kj}E_{jm}E_{mi}=\dfrac12S^2.
$$
We obtain
\begin{equation}\label{3.14}
\begin{aligned}
&\dfrac13f\Delta F_3+\dfrac13\langle \nabla f, \nabla F_3\rangle\\
&=f(RF_3+S^2+2\sum_{i,j,k,m}E_{ik}E_{kj,m}E_{ij,m})-2\sum_{i,j,k}E_{ik}E_{jk}C_{ijm}f_{,m}.
\end{aligned}
\end{equation}

\end{proof}

\section{Proof of the theorem 1.1}
\vskip5mm
\noindent
{\it Proof of the theorem 1.1}. At each point, we choose an orthonormal frame such that 
$$
E_{ij}=\lambda_i\delta_{ij}.
$$
We know 
$$
\sum_iE_{ii}=\sum_i\lambda_i=0, \  \ S=\sum_{i,j}E_{ij}^2=\sum_i\lambda_i^2, \ \ F_3=\sum_i\lambda_i^3
$$
Hence, 
$$
-\dfrac{S^{\frac32}}{\sqrt 6}\leq F_3\leq \dfrac{S^{\frac32}}{\sqrt 6}
$$
and the equalty holds if and only if, at least, two of $\lambda_1$, $\lambda_2$ and $\lambda_3$ are equal.
Since $S$ is constant, from the proposition \ref{3.1}, we have
\begin{equation}\label{4.1}
\begin{aligned}
\sum_{i,j,k}E_{ij,k}^2+\dfrac12\sum_{i,j,k}C_{ijk}^2+6F_3+RS=0.\\
\end{aligned}
\end{equation}
Because of  $R\geq 0$,  $\sum_iE_{ii}=0$ and $S=\sum_{i,j}E_{ij}^2$, we know
$$
-\dfrac{S^{\frac32}}{\sqrt 6}\leq F_3\leq -\dfrac{RS}6.
$$
Since $S$ is constant, the Ricci curvature is bounded and $F_3$ is bounded. By applying
the generalized maximum principle due to Omori and Yau to $ F_3$, there exists a sequence
$\{p_k\} \subset  M^3$ such that 
\begin{equation}\label{4.}
\lim_{k\rightarrow\infty} F_3(p_{k})=\sup F_3,\quad
\lim_{k\rightarrow\infty} |\nabla F_3|(p_{k})=0,\quad
\limsup_{k\rightarrow\infty}\Delta F_3(p_{k})\leq 0.
\end{equation}
If 
$$
\sup F_3=-\dfrac{S^{\frac32}}{\sqrt 6},
$$
we know $F_3$ is constant. Hence, $\lambda_1$,  $\lambda_2$ and $\lambda_3$ are constant.
If $\lambda_1=\lambda_2=\lambda_3 $ holds, we know $S=0$. Hence, $C_{ijk}\equiv 0$, from Kobayashi's result \cite{K},
$M^3$ is either Euclidean space or the standard sphere.\newline
 Next, we can assume $\lambda_1=\lambda_2\neq \lambda_3 $. Since $dE_{ij}=d(\lambda_i\delta_{ij})=0$,
 from 
 $$
 \sum_kE_{ij,k}\omega_k=dE_{ij}+\sum_{k}E_{kj}\omega_{ki}+\sum_kE_{ik}\omega_{kj},
$$
we have
$$
 \sum_kE_{ij,k}\omega_k=(\lambda_j-\lambda_i)\omega_{ji}.
$$
Thus, we get, for any $i, k$, 
$$
E_{ii,k}=0, \ \ E_{12,k}=0.
$$
From $\sum_iR_{ij,i}=0$, we obtain, for any $i, j$,
$$
C_{iij}=0.
$$
Since
$$
fC_{123}=D_{123}=0.
$$
Hence, $C_{123}=0$. We have
$$
C_{ijk}=0,
$$
that is, $M^3$ is locally conformally flat. \newline
Next,  we consider the case of 
$$
-\dfrac{S^{\frac32}}{\sqrt 6}< \sup F_3\leq -\dfrac{RS}6.
$$
Thus, since $S$  is constant,  from \eqref{4.1}, $E_{ij}=\lambda_i\delta_{ij}$, $E_{ij,k}$  and $C_{ijk}$, for any $i, j, k$ are bounded.
We can assume 
\begin{equation}
\begin{aligned}
&\lim_{m\to\infty}E_{ij}(p_m)=\bar E_{ij}=\bar \lambda_i\delta_{ij}\\
&\lim_{m\to\infty}E_{ij,k}(p_m)=\bar E_{ij,k}, \  \ \lim_{m\to\infty}C_{ijk}(p_m)=\bar C_{ijk},\\
&\lim_{k\to\infty}\dfrac{\nabla f(p_k)}{|\nabla f(p_k)|}=(\bar f_{,1}, \bar f_{,2}, \bar f_{,3})
\end{aligned}
\end{equation}
Under the  limit  process, we have, for any $k$,
\begin{equation}
\begin{cases}
\begin{aligned}
& \sum_i\bar E_{ii,k}=0\\
&\sum_i\bar \lambda_iE_{ii,k}=0\\
&\sum_i\bar \lambda_i^2E_{ii,k}=0.
\end{aligned}
\end{cases}
\end{equation}
Since $\bar \lambda_1<\bar \lambda_2<\bar\lambda_3$, we get 
$$
\bar E_{ii,k}=0.
$$
Since the scalar curvature is constant, we have
$$
\sum_i\bar E_{ik,i}=0.
$$
Hence, we conclude
\begin{equation}
\begin{aligned}
\bar E_{12,1}+\bar E_{32,3}=0,  \ \bar E_{21,2}+\bar E_{31,3}=0,\  \bar E_{13,1}+\bar E_{232}=0.
\end{aligned}
\end{equation}
From
$$
fC_{ijk}=D_{ijk},
$$
and for distinct $i, j, k$ and at any point, $E_{ij}=\lambda_i\delta_{ij}$, we know, according to the definition of $D_{ijk}$,
$$
D_{ikk}=0, \ \ D_{ijk}=0.
$$ 
 Since the set $f^{-1}(0)$ has the measure zero, we can assume $f(p_k)\neq 0$ for any $k$.
Since $D_{123}(p_k)=0$, we have
$$
\bar C_{ikk}=0, \ \ \bar C_{123}=0.
$$
Hence, we conclude, for any $i, j$, 
\begin{equation}
\begin{aligned}
&\bar E_{ii,j}=0, \ \bar E_{12,3}=\bar E_{13,2}=\bar E_{23,1},\\
&\bar E_{12,1}=-\bar E_{32,3}, \  \bar E_{21,2}=-\bar E_{31,3}, \  \bar E_{13,1}=-\bar E_{23,2},
\end{aligned}
\end{equation}
According to 
$$
fC_{ijk}=D_{ijk},
$$
we know, for any $i\neq j$, 
$$
\begin{aligned}
&f(E_{iij}-E_{ij,i})(p_k)=fC_{iij}(p_k)=D_{iij}(p_k)\\
&=-(2\lambda_i+\lambda_j)f_{,j}(p_k).
\end{aligned}
$$
Hence,  we know that $\{\dfrac{f_{,j}(p_k)}{f(p_k)}\}$ is a convergent sequence from $2\bar \lambda_i+\bar \lambda_j\neq 0$. Putting 
\begin{equation}\label{4.7}
\lim_{k\to\infty}\dfrac{f_{,j}(p_k)}{f(p_k)}=a_{j},
\end{equation}
we have 
\begin{equation}\label{4.8}
\bar E_{ij,i}=(2\bar \lambda_i+\bar \lambda_j)\lim_{k\to\infty}\dfrac{f_{,j}(p_k)}{f(p_k)}=(2\bar \lambda_i+\bar \lambda_j)a_j.
\end{equation}
From 
$$
\begin{aligned}
&2E_{ik}E_{kj,l}E_{ij,l}(p_m)=\sum_{i,j,k}(\lambda_i+\lambda_j) E_{ij,k}^2(p_m)
\end{aligned}
$$
we obtain
$$
\begin{aligned}
&2\lim_{m\to\infty}\sum_{i,j,k,l}E_{ik}E_{kj,l}E_{ij,l}(p_m)\\
&=\sum_{i,j,k}(\bar\lambda_i+\bar\lambda_j)\bar E_{ij,k}^2\\
&=2(\bar\lambda_1+\bar\lambda_2)\bar E_{12,k}^2+2(\bar\lambda_1+\bar\lambda_3)\bar E_{13,k}^2+2(\bar\lambda_2+\bar\lambda_3)\bar E_{23,k}^2\\
&=-2\bar \lambda_3(\bar E_{12,1}^2+\bar E_{12,2}^2+\bar E_{12,3}^2)-2\bar \lambda_2(\bar E_{13,1}^2+\bar E_{13,2}^2+\bar E_{13,3}^2)\\
&-2\bar \lambda_1(\bar E_{23,1}^2+\bar E_{23,2}^2+\bar E_{23,3}^2)\\
&=2\bar \lambda_2\bar E_{12,1}^2+2\bar \lambda_1\bar E_{21,2}^2+2\bar \lambda_3\bar E_{13,1}^2.
\end{aligned}
$$
According to \eqref{4.8} and the above equality, we have
\begin{equation}\label{4.9}
\begin{aligned}
&2\lim_{m\to\infty}\sum_{i,j,k,l}E_{ik}E_{kj,l}E_{ij,l}(p_m)\\
&=2\bar \lambda_2\bar E_{12,1}^2+2\bar \lambda_1\bar E_{21,2}^2+2\bar \lambda_3\bar E_{13,1}^2\\
&=2\bar \lambda_2(\bar \lambda_1-\bar\lambda_3)^2a_{2}^2+2\bar \lambda_1(\bar\lambda_2-\bar \lambda_3)^2a_1^2
+2\bar \lambda_3(\bar\lambda_1-\bar\lambda_2)^2 a_3^2\\
&=\big\{2\bar \lambda_2(S-\bar \lambda_2^2)-\dfrac{4}3\sup F_3\big\}a_{2}^2
+\big\{2\bar \lambda_1(S-\bar \lambda_1^2)-\dfrac{4}3\sup F_3\big\}a_1^2\\
&+\big\{2\bar \lambda_3(S-\bar \lambda_3^2)-\dfrac{4}3\sup F_3\big\}a_3^2\\
&=2S\sum_i\bar\lambda_ia_i^2-\dfrac{4}3\sup F_3(a_1^2+a_2^2+a_3^2)-2\sum_i\bar\lambda_i^3a_i^2\\
&=S\sum_i\bar\lambda_ia_i^2-2\sup F_3(a_1^2+a_2^2+a_3^2).
\end{aligned}
\end{equation}
Here we have  used $F_3=3\lambda_1\lambda_2\lambda_3=3\lambda_i^3-\frac32\lambda_iS$ for any $i$.
\begin{equation}\label{4.10}
\begin{aligned}
&2\lim_{m\to\infty}\sum_{i,j,k,l}\dfrac{f_{,l}}{f}E_{ik}E_{jk}C_{ilj}(p_m)=2\lim_{m\to\infty}\sum_{i,j}\dfrac{f_{,j}}{f^2}\lambda_i^2D_{iji}(p_m)\\
&=2\lim_{m\to\infty}\sum_{i,j}\dfrac{f_{,j}}{f^2}\lambda_i^2\big\{2(E_{ii}f_{,j}-E_{ij}f_{,i})+\sum_l(E_{lj}\delta_{ii}-E_{li}\delta_{ij})f_{,l}\big\}(p_m)\\
&=4\sup F_3\sum_ia_i^2-6\sum_i\bar\lambda_i^3a_i^2+2S\sum_i\bar\lambda_ia_i^2\\
&=2\sup F_3\sum_ia_i^2-S\sum_i\bar\lambda_ia_i^2.
\end{aligned}
\end{equation} 
Since 
\begin{equation*}
\begin{aligned}
&\dfrac{1}3f\Delta F_3 +\dfrac13\langle \nabla f, \nabla F_3\rangle \\
&=f\bigl (RF_3+S^2+2\sum_{i,j,k}E_{ik}E_{ij,m}E_{jk,m}\bigl)
+2\sum_{i,j,k,m}f_{,m}E_{ik}E_{jk}C_{imj},\\
\end{aligned}
\end{equation*} 
from \eqref{4.}, \eqref{4.7}, \eqref{4.9} and \eqref{4.10}, we have
$$
R\sup F_3+S^2\leq 0.
$$
We have
$$
S^2\leq-R \sup F_3<-R(-\frac{S^{\frac{3}{2}}}{\sqrt{6}}),
$$
that is,
$$
S<\frac{R^2}{6}.
$$
On the other hand, from
$$
-\dfrac{S^{\frac32}}{\sqrt 6}< \sup F_3\leq -\dfrac{RS}6,
$$
we have 
$$
S>\frac{R^2}{6}
$$
It is a contradiction, which means that  this case does not occur.
We complete the proof of the theorem 1.1.

\end{document}